\documentclass[nocrop]{lms}

\usepackage[T1]{fontenc}
\usepackage[utf8]{inputenc}
\usepackage{amsmath,amssymb}
\usepackage{microtype}
\usepackage{booktabs,tabularx}
\usepackage[hidelinks]{hyperref}
\hypersetup{
  pdftitle={ISP failures and the topology of universal operators},
  pdfauthor={João Carmo, Ben Hur Eidt and Waleed Noor},
  pdfsubject={Invariant subspaces, affine composition operators, and universal operators}
}

\newtheorem{theorem}{Theorem}[section]

\newtheorem{proposition}[theorem]{Proposition}
\newtheorem{corollary}[theorem]{Corollary}
\newcommand{\D}{\mathbb D}
\newcommand{\C}{\mathbb C}
\newcommand{\T}{\mathbb T}
\newcommand{\Mf}{\mathcal M_f}
\newcommand{\norm}[1]{\left\lVert#1\right\rVert}
\newcommand{\clspan}{\overline{\operatorname{span}}}

\simpleequations
\standardsubsections

\title[Universal operators: topology and consequences of ISP failure]
  {Universal operators: topology and consequences of ISP failure}
\author{Jo\~ao Carmo, Ben Hur Eidt \and Waleed Noor}
\classno{47A15 (primary), 47B33, 30H10 (secondary)}

\makeatletter
\def\@footnotetwo{\footnotetext{2020 {\normalfont\itshape
  Mathematics Subject Classification\/} \@classno.}}
\let\ps@lmslast\ps@plain
\makeatother
\journal{}
\begin{document}

\maketitle
\thispagestyle{plain}

\begin{abstract}
Assuming a negative answer to the invariant subspace problem for
separable complex Hilbert spaces, we prove that every universal
operator has a continuum of minimal infinite-dimensional invariant
subspaces with pairwise trivial intersections. These subspaces can
be chosen so that the restrictions are invertible and their powers
decay at any prescribed geometric rate. For an affine composition
operator on the Hardy space \(H^2(\D)\), we derive restrictions on
the zeros, inner factors and Hardy regularity of generators, and
construct minimal models with radial vanishing of any prescribed finite
order. Independently of the invariant subspace problem, we identify
the norm interior, closure and boundary of the universal operator
class. Its interior is the Caradus--Pozzi class, its closure is the
complement of the upper semi-Fredholm class, and its boundary
consists of the operators that are neither upper nor lower
semi-Fredholm.
\end{abstract}

\section{Introduction}
\label{sec:introduction}

The invariant subspace problem (ISP) for Hilbert spaces asks whether every bounded operator on a separable infinite-dimensional complex Hilbert space has a proper nonzero closed invariant subspace. Fix such a Hilbert space \(H\), and write \(\mathcal B(H)\) for its bounded linear operators. An operator \(U\) is \emph{universal} in Rota's sense if, for every \(A\in\mathcal B(H)\), there is a scalar \(c\ne0\), a closed \(U\)-invariant subspace \(M\), and a bounded isomorphism \(J:H\to M\) such that
\[
 UJ=cJA.
\]
Write \(\mathcal U\) for the class of universal operators. Rota's classical model is the backward shift of infinite multiplicity \cite{Rota}. We use the realization
\[
 B_\infty(x_0,x_1,\ldots)=(x_1,x_2,\ldots)
 \quad\text{on }\ell^2(\mathbb N_0;H),
 \qquad S_\infty=B_\infty^*.
\]
Here \(\mathbb N_0=\{0,1,2,\ldots\}\). We fix a unitary
identification of \(\ell^2(\mathbb N_0;H)\) with \(H\) and
henceforth regard \(B_\infty\) and \(S_\infty\) as operators on
\(H\). Then \(B_\infty S_\infty=I\), and \(B_\infty\) is
universal with norm one. 

A nonzero closed invariant subspace is \emph{minimal} if it
contains no proper nonzero closed invariant subspace for
the same operator. For any fixed universal operator \(U\),
the ISP has an affirmative answer if and only
if every minimal \(U\)-invariant subspace is one-dimensional.
Indeed, if the ISP fails, universality realizes a counterexample,
up to similarity and a nonzero scalar multiple, as a restriction
\(U|_M\); the subspace \(M\) is then infinite-dimensional and
minimal. Conversely, any such restriction is itself an ISP
counterexample. This equivalence underlies the results here that are conditional on ISP  failure. Universal operators enjoy the following inheritance property: an operator with a universal restriction to a closed
invariant subspace is universal \cite[Remark~1.4]{Pozzi};
see its block formulation in \cite[Proposition~2.1]{ST}.

For the function-theoretic part of this work, let \(\D\) be the unit disk, \(\T=\partial\D\), and \(\sigma\) normalized arclength measure on \(\T\). Write \(H^p=H^p(\D)\) for the Hardy spaces and \(\norm{f}_p\) for their norms, and \(H^\infty\) consists of bounded analytic functions \cite{Duren}. For \(0\ne f\in H^2\), we use the canonical factorization
\(f=BSO\), where \(B\) is a Blaschke product, \(S\) is a singular
inner factor, and \(O\) is outer. We call the inner factor \emph{purely Blaschke} when
\(S\) is constant, and call \(f\) \emph{outer} when its entire
inner factor is constant. Fix \(0<a<1\), and reserve \(T\) throughout the article for
\[
 \phi_a(z)=az+1-a,\qquad
 T=C_{\phi_a}:H^2\longrightarrow H^2,\qquad Tf=f\circ\phi_a.
\]
For \(f\in H^2\), denote the $T$-cyclic subspace it generates by
\[
 \Mf=\clspan\{T^nf:n\ge0\}.
\]
The operator \(T\) is injective, and the symbol of its iterates $T^n$ satisfy
\begin{equation}
 \phi_a^{\circ n}(z)=1-a^n(1-z)\qquad(n\ge0).
 \label{eq:iterate}
\end{equation}
If \(M\) is minimal for \(T\), then \(\Mf=M\) for every \(0\ne f\in M\). An infinite-dimensional minimal space therefore contains no eigenvector. Carmo and Noor proved that \(T-\lambda I\) is universal for \(0<|\lambda|<a^{-1/2}\) \cite[Theorem 15]{CN}. Hence failure of the Hilbert-space ISP would give minimal infinite-dimensional \(T\)-invariant subspaces in \(H^2\), which we sometimes call \emph{minimal models}.

Section~\ref{sec:multiplicity} constructs a continuum of minimal
models inside every universal operator.
Section~\ref{sec:affine} studies generators for the affine
composition operator and constructs selected minimal models. More concretely, for every infinite-dimensional minimal \(T\)-invariant subspace
\(M\subset H^2\), one has \(M\cap zM=\{0\}\), no
generator extends analytically across \(1\), and no disk zero
can be removed by division by \(z-w\) while remaining in \(M\).
Every positive iterate of a generator has a purely Blaschke inner
factor; if a generator has finitely many disk zeros, some
positive iterate is outer. Moreover, each such \(M\) contains
a dense \(G_\delta\) set of generators whose every forward
iterate lies outside every \(H^p\), \(p>2\). For each \(m>0\) one can additionally
choose a minimal infinite-dimensional model \(N_m\) on which
\(T\) is invertible.
In these selected models, every nonzero generator itself
extends holomorphically to \(\{z:\operatorname{Re}z<1\}\), has an absolutely summable
orbit, and satisfies the uniform radial estimate
\[
 |f(r)|\le C_m\norm{f}_2(1-r)^m
 \qquad(f\in N_m,\ 0<r<1),
\]
where \(C_m\) is independent of \(f\).
Section~\ref{sec:topology} determines the norm interior, closure
and boundary of \(\mathcal U\), and compares them with the
Caradus and Caradus--Pozzi classes of operators
\[
\begin{aligned}
 \mathcal C&=\{A\in\mathcal B(H):\operatorname{Ran}A=H,\ \dim\ker A=\infty\},\\
% AUTHOR REVIEW: Pozzi's class has finite range codimension;
% the supplied manuscript currently says infinite codimension below.
 \mathcal C_+&=\{A\in\mathcal B(H):
    \operatorname{Ran}A\text{ is closed},\
    \dim\ker A=\infty,\ \dim\ker A^*<\infty\}.
\end{aligned}
\]
The criteria of Caradus \cite{Caradus} and Pozzi \cite{Pozzi} give \(\mathcal C\subseteq\mathcal C_+\subseteq\mathcal U\). The upper and lower semi-Fredholm classes are, respectively,
\[
\begin{aligned}
 \Phi_+&=\{A\in\mathcal B(H):\operatorname{Ran}A\text{ is closed},\ \dim\ker A<\infty\},\\
 \Phi_-&=\{A\in\mathcal B(H):\operatorname{Ran}A\text{ is closed},\
                             \dim\ker A^*<\infty\}.
\end{aligned}
\]
We summarize the main results of Section~\ref{sec:topology} in the table below. Abbreviate
\[
 \mathcal D=\mathcal B(H)\setminus\Phi_+,\qquad
 \Sigma=\mathcal B(H)\setminus(\Phi_+\cup\Phi_-).
\]
% AUTHOR REVIEW: Sigma is the class of non-semi-Fredholm operators;
% it is a proper subset of the non-Fredholm operators in general.
Thus \(\Sigma\) consists of operators that are neither upper nor
lower semi-Fredholm. 

\begin{table}[htbp]
\centering
\caption{Topological properties of the universal operator classes.}
\label{tab:topology}
\vspace{5pt}
\small
\setlength{\tabcolsep}{6pt}
\renewcommand{\arraystretch}{1.15}
\begin{tabularx}{\textwidth}{@{}Xccc@{}}
\toprule
Property & \(\mathcal C\) & \(\mathcal C_+\) & \(\mathcal U\)\\
\midrule
Open in norm & Yes & Yes & No\\
Closed in norm & No & No & No\\
Norm interior & \(\mathcal C\) & \(\mathcal C_+\) & \(\mathcal C_+\)\\
Norm closure & \(\mathcal D\) & \(\mathcal D\) & \(\mathcal D\)\\
Norm boundary & \(\mathcal D\setminus\mathcal C\) & \(\Sigma\) & \(\Sigma\)\\

\bottomrule
\end{tabularx}
\end{table}

\section{A continuum of ISP counterexamples}
\label{sec:multiplicity}

The results of this section show that the failure of the ISP would generate continuum-many minimal models inside every universal operator. The construction also permits invertible models.

\begin{theorem}
\label{thm:multiplicity}
Let \(U\in\mathcal U\). If the Hilbert-space ISP has a negative answer, then, for every prescribed \(q\in(0,1)\), there exists a continuum of infinite-dimensional minimal \(U\)-invariant subspaces \(N_t\), \(t\in\C\), and a constant \(\kappa\ge1\) such that
\[
 N_s\cap N_t=\{0\}\quad(s\ne t),\qquad
 U|_{N_t}\text{ is invertible},
\]
and
\[
 \norm{(U|_{N_t})^n}\le\kappa q^n
 \qquad(t\in\C,\ n\ge0).
\]
The constant \(\kappa\) is independent of \(t\) and \(n\). In particular, every vector in every \(N_t\) has an absolutely summable forward orbit.
\end{theorem}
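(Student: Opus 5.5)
Assume the ISP fails and fix a counterexample \(A_0\in\mathcal B(H)\) with no nontrivial closed invariant subspace. The plan is to build a single auxiliary operator on \(H\) whose minimal invariant subspaces come in a continuum, and then transport them into \(U\) by universality.

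\textbf{Step 1: normalise the counterexample.} Since \(A_0\) has no eigenvalues, its spectrum is an infinite compact set, so it has a nonzero point \(\mu\). For \(\lambda\) not in the spectrum, the resolvent \((A_0-\lambda)^{-1}\) has the same invariant subspaces as \(A_0\), because it is a norm limit of polynomials in \(A_0\) on the unbounded component (or by Runge's theorem in general). Choosing \(\lambda\) outside the spectrum with \(|\lambda|\) large, the operator \(A_1=\lambda\bigl(\lambda-A_0\bigr)^{-1}\) is an invertible ISP counterexample. Rescale it to \(A=rA_1\), and note that \(A\) is then invertible with spectral radius as small as we like. By Gelfand's formula, choosing \(r\) suitably gives \(\norm{A^n}\le\kappa_0 q_0^n\) for any prescribed \(q_0>0\).

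\textbf{Step 2: a continuum inside one operator.} Let \(W=A\otimes I\) act on \(H\otimes\ell^2\cong H\). For each \(t\in\C\), consider the graph-type subspace
\[
L_t=\{x\otimes e_0+t\,x\otimes e_1+\cdots:x\in H\},
\]
that is, \(L_t=\{x\otimes v_t\}\) for a fixed unit vector \(v_t\in\ell^2\). The vectors \(v_t\) should be pairwise linearly independent, for example \(v_t=(1,t,t^2/2!,\dots)\) normalised, which gives pairwise distinct lines. Each \(L_t\) is \(W\)-invariant, and \(W|_{L_t}\) is unitarily equivalent to \(A\); hence \(L_t\) is minimal, \(W|_{L_t}\) is invertible, and its powers obey the bound from Step 1 uniformly in \(t\). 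Moreover \(L_s\cap L_t=\{0\}\) for \(s\ne t\), because \(x\otimes v_s=y\otimes v_t\) with \(v_s,v_t\) independent forces \(x=y=0\).

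\textbf{Step 3: transfer into \(U\).} Apply universality to \(W\). This produces \(c\ne0\), a closed \(U\)-invariant \(M\), and an isomorphism \(J:H\to M\) with \(UJ=cJW\). Set \(N_t=J(L_t)\); these are closed because \(J\) is an isomorphism onto a closed subspace. Each \(N_t\) is \(U\)-invariant, and \(U|_{N_t}=J(cW|_{L_t})J^{-1}\). Minimality passes through: a closed \(U\)-invariant subspace of \(N_t\) pulls back under \(J\) to a closed \(W\)-invariant subspace of \(L_t\). Trivial pairwise intersections pass through by injectivity of \(J\), and invertibility also transfers. Finally,
\[
\norm{(U|_{N_t})^n}\le\norm J\norm{J^{-1}}\kappa_0(|c|q_0)^n .
\]

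\textbf{Main obstacle.} The difficulty is that \(c\) is produced by universality after \(W\) is fixed, so \(q_0\) cannot simply be chosen to absorb it. I would resolve this by applying universality to \(W_k=2^{-k}W\) for each \(k\), or by noting that \(c\) can be bounded. The cleanest route is to apply universality not to \(W\) but to \(W'=\bigoplus_k 2^{-k}W\). Universality gives one constant \(c\) for \(W'\), and we may then select a summand \(k\) with \(|c|\,2^{-k}q_0\le q\). Restricting to that summand's copies of \(L_t\) preserves minimality and the other properties, with \(\kappa=\norm J\norm{J^{-1}}\kappa_0\). The final claim follows since \(\sum_n\kappa q^n\norm x<\infty\).
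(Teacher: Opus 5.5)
Your argument is correct, but it controls the universality scalar with a different device from the paper's. The paper applies universality once, to the finite sum \(D=B\oplus B\oplus LI\), where \(B=cI+A\) is an invertible counterexample. The scalar summand forces \(\alpha L\) to be an eigenvalue of \(U\), so \(|\alpha|\le\norm{U}/L\) is bounded in advance, and \(L\) can be fixed beforehand so that \(\norm{\alpha B}<q\). The graphs \(\{(x,tx,0)\}\) in two copies of \(B\) then supply the continuum, with \(\kappa=\norm{J}\norm{J^{-1}}\). You leave \(c\) uncontrolled, apply universality to \(\bigoplus_k2^{-k}W\), and select a summand afterwards. This is the device the paper itself uses in the proof of Theorem~\ref{thm:radial}, with \(\bigoplus_jA/j\). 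The paper's route needs only a finite sum and a one-line spectral observation; yours needs no estimate on the scalar at all. With your geometric scaling, the spectral-radius normalisation and Gelfand's formula in Step~1 are unnecessary. Choosing \(k\) with \(|c|2^{-k}\norm{A}\le q\) already gives \(\kappa=\norm{J}\norm{J^{-1}}\).

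Step~1 has two flaws, neither fatal.

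\emph{The spectrum remark.} The claim that the spectrum of \(A_0\) is infinite with a nonzero point is false in general. Riesz projections force only connectedness, and a quasinilpotent counterexample would have spectrum \(\{0\}\). You never use this claim, so it can simply be deleted.

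\emph{The invariant-subspace argument.} Your reason for \(A_1\) having the invariant subspaces of \(A_0\) proves the wrong inclusion. That \((\lambda-A_0)^{-1}\) is a norm limit of polynomials in \(A_0\) shows every \(A_0\)-invariant subspace is \(A_1\)-invariant. You need the converse: every \(A_1\)-invariant subspace is \(A_0\)-invariant. The converse does hold for large \(|\lambda|\). If \(|\lambda|>2\norm{A_0}\), then \(\norm{I-A_1}=\norm{(\lambda-A_0)^{-1}A_0}<1\). Hence \(A_1^{-1}=\sum_n(I-A_1)^n\) is a norm limit of polynomials in \(A_1\), and so is \(A_0=\lambda(I-A_1^{-1})\). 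Your parenthetical appeal to Runge's theorem does not cover \(\lambda\) in a bounded component of the resolvent set. The simplest repair is the paper's: use \(cI+A_0\) with \(c>\norm{A_0}\). This operator is invertible by the Neumann series and visibly has the same invariant subspaces as \(A_0\), so the resolvent detour is not needed.
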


\begin{proof}
Choose \(A\in\mathcal B(H)\) with no proper nonzero closed invariant subspace. Choose a real number \(c>\norm{A}\), and put
\[
 B=cI+A.
\]
The operators \(B\) and \(A\) have exactly the same closed
invariant subspaces, so \(B\) also has no proper nonzero closed
invariant subspace. Moreover, the Neumann series gives
\[
 B^{-1}=\frac1c\sum_{n=0}^{\infty}\left(-\frac Ac\right)^n,
 \qquad \norm{B^{-1}}\le\frac1{c-\norm{A}}.
\]
Thus \(B\) is invertible. The scalar summand below will control the otherwise unspecified
scalar supplied by universality. Choose a real number \(L>0\)
sufficiently large that
\[
 \frac{\norm{U}\norm{B}}{L}<q,
\]
and consider
\[
 D=B\oplus B\oplus LI
 \quad\text{on }H\oplus H\oplus H.
\]
Identify this direct sum unitarily with the Hilbert space on which \(U\) acts. By universality, there are a scalar \(\alpha\ne0\), a closed \(U\)-invariant subspace \(F\), and a bounded isomorphism \(J:H\oplus H\oplus H\to F\) such that
\[
 UJ=\alpha JD.
\]
The third summand makes \(\alpha L\) an eigenvalue of \(U\), and therefore
\[
 |\alpha|L\le\norm{U},\qquad
 \norm{\alpha B}\le\frac{\norm{U}\norm{B}}L<q.
\]

For \(t\in\C\), define
\[
 G_t=\{(x,tx,0):x\in H\},\qquad N_t=JG_t.
\]
Each \(G_t\) is closed and \(D\)-invariant, and \(D|_{G_t}\) is similar to \(B\). Hence \(N_t\) is closed and \(U\)-invariant, with
\[
 U|_{N_t}\sim\alpha B.
\]
Since \(\alpha\ne0\), this restriction is infinite-dimensional and minimal, and it is invertible because \((\alpha B)^{-1}=\alpha^{-1}B^{-1}\). Distinct graphs have zero intersection, and injectivity of \(J\) gives \(N_s\cap N_t=\{0\}\) for \(s\ne t\).

The similarity maps \(J_t:H\to N_t\), defined by \(J_t x=J(x,tx,0)\), satisfy
\[
 \norm{J_t}\le\norm{J}\sqrt{1+|t|^2},\qquad
 \norm{J_t^{-1}}\le\frac{\norm{J^{-1}}}{\sqrt{1+|t|^2}}.
\]
Consequently, with \(\kappa=\norm{J}\norm{J^{-1}}\),
\[
 \norm{(U|_{N_t})^n}
 \le\norm{J_t}\norm{J_t^{-1}}\norm{\alpha B}^n
 \le\kappa q^n.
\]
Summing this geometric bound proves absolute orbit summability.
\end{proof}

\section{A composition operator \texorpdfstring{\(H^2\)}{H2}-model}
\label{sec:affine}

\subsection{Shifted minimal invariant subspaces and zeros of generators}
\label{sec:zeros}

For \(c>0\) write
\[
 S_c(z)=\exp\left(-c\frac{1+z}{1-z}\right).
\]
Carmo and Noor proved that no nonzero vector in \(S_cH^2\) can generate a minimal \(T\)-invariant subspace \cite[Theorem 21]{CN}. We use this obstruction below. Likewise, a nonzero vector of an infinite-dimensional minimal subspace cannot extend analytically across \(1\) \cite[Theorem 16]{CEN}.

\begin{theorem}
\label{thm:noshift}
If \(M\) is a minimal \(T\)-invariant subspace, then
\[
 M\cap zM=\{0\}.
\]
\end{theorem}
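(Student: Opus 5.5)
The plan is to introduce an auxiliary subspace of \(M\), show it is closed and \(T\)-invariant, and then use minimality. Set
\[
 K=\{h\in M:\ zh\in M\}.
\]
Since multiplication by \(z\) is an isometry on \(H^2\) and \(M\) is closed, \(K\) is a closed subspace of \(M\). The case split is then \(K=\{0\}\) versus \(K=M\). If \(K=\{0\}\), every \(g=zh\in M\cap zM\) has \(h\in K\), so \(g=0\) and we are done. The case \(K=M\) will be ruled out.

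\textbf{\(T\)-invariance of \(K\).} This uses the affine form of the symbol, \(\phi_a(z)=az+1-a\). For any \(h\in H^2\),
\[
 T(zh)=\phi_a\cdot (h\circ\phi_a)=a\,z\,Th+(1-a)\,Th .
\]
Let \(h\in K\). Then \(Th\in M\) and \(T(zh)\in M\), because both \(h\) and \(zh\) lie in the \(T\)-invariant space \(M\). Since \(a\ne0\), this gives \(z\,Th=a^{-1}\bigl(T(zh)-(1-a)Th\bigr)\in M\), so \(Th\in K\). Minimality of \(M\) now forces \(K=\{0\}\) or \(K=M\).

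\textbf{Ruling out \(K=M\).} In this case \(zM\subseteq M\). If \(M\) is finite-dimensional, minimality makes it one-dimensional, say \(M=\C h\), because \(T|_M\) has an eigenvector. Then \(zh=\lambda h\) for some scalar \(\lambda\), which is impossible for \(h\neq0\) since the shift has no eigenvalues. If \(M\) is infinite-dimensional, it is a nonzero closed shift-invariant subspace. By Beurling's theorem, \(M=\theta H^2\) for some inner \(\theta\). Then \(\theta S_c\in M\) for any \(c>0\) is a nonzero vector lying in \(S_cH^2\). Minimality gives \(M_{\theta S_c}=M\), so this vector would generate a minimal \(T\)-invariant subspace. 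That contradicts \cite[Theorem 21]{CN}.

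The only step with real content is the \(T\)-invariance of \(K\), since it depends on the identity for \(T(zh)\) coming from the affine form of \(\phi_a\). The final contradiction needs only Beurling's theorem and the Carmo--Noor obstruction recalled at the start of this subsection.
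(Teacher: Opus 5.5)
Your proof is correct and follows essentially the same route as the paper. Your identity $T(zh)=(az+1-a)\,Th$ is the paper's intertwining relation $TR=aRT$ with $R$ multiplication by $1-z$. Both arguments combine it with minimality to get $zM\subseteq M$, then finish with Beurling's theorem and \cite[Theorem 21]{CN}. The only real difference is how minimality is invoked: you apply it to the closed $T$-invariant subspace $K=\{h\in M: zh\in M\}$, while the paper uses density of $\{p(T)f\}$ in $M$ together with boundedness of $R$. Your finite-dimensional case is harmless but unnecessary, since Beurling's theorem already covers every nonzero closed $z$-invariant subspace.
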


\begin{proof}
Suppose that \(M\cap zM\ne\{0\}\). Choose
\(0\ne g\in M\cap zM\) and write \(g=zf\) with \(f\in M\).
Then \(f,zf\in M\) and \(f\ne0\). Let \(R\) be multiplication
by \(1-z\). Thus \(Rf=f-zf\in M\). Direct calculation yields the intertwining identity
\begin{equation}
 TR=aRT.
 \label{eq:intertwine}
\end{equation}
For any polynomial \(p\),
\[
 Rp(T)f=p(a^{-1}T)Rf\in M.
\]
The vectors \(p(T)f\) are dense in \(M\) by minimality. Since \(R\) is bounded on \(H^2\) and \(M\) is closed, \(RM\subseteq M\). Hence \(zM=(I-R)M\subseteq M\). Beurling's theorem now gives \(M=\Theta H^2\) for an inner function \(\Theta\). For any \(c>0\), the nonzero function \(\Theta S_c\) belongs to \(M\) and has an atomic singular inner factor at \(1\). This contradicts \cite[Theorem 21]{CN}, since every nonzero vector of the minimal space \(M\) generates \(M\).
\end{proof}

\begin{corollary}
\label{cor:zero}
Suppose \(M\) is minimal, \(0\ne f\in M\), and \(f(w)=0\) for some \(w\in\D\). Then \(f(z)/(z-w)\notin M\).
\end{corollary}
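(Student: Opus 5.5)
The corollary reduces directly to Theorem~\ref{thm:noshift}. The plan is to argue by contradiction: assuming the quotient lies in \(M\), we produce a nonzero element of \(M\cap zM\).

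Suppose that \(g(z)=f(z)/(z-w)\) belongs to \(M\). First I would note that \(g\) is a genuine element of \(H^2\). Since \(f(w)=0\) with \(w\in\D\), the quotient \(g\) is analytic in \(\D\). Moreover \(|z-w|\ge 1-|w|>0\) near the boundary, so dividing by \(z-w\) preserves membership in \(H^2\). Alternatively, \(g\) is the image of \(f\) under the bounded backward-shift-type operator \(f\mapsto (f-f(w))/(z-w)\). Also \(g\ne0\), because \(f\ne0\).

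Next, write \(f=(z-w)g\), so that
\[
 zg=f+wg.
\]
Both \(f\) and \(g\) lie in \(M\), and \(M\) is a linear subspace, so \(zg\in M\). On the other hand \(zg\in zM\) because \(g\in M\). Since multiplication by \(z\) is injective on \(H^2\), \(zg\ne0\). Thus \(zg\) is a nonzero element of \(M\cap zM\), which contradicts Theorem~\ref{thm:noshift}. Therefore \(f/(z-w)\notin M\).

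There is no substantial obstacle. The only points to check are that the quotient is a nonzero element of \(H^2\) and that \(zg\) is nonzero. The latter follows from injectivity of the unilateral shift. All the real work is already contained in Theorem~\ref{thm:noshift}.
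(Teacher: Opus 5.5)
Your proposal is correct and is essentially the paper's proof. Both check that the quotient \(h=f/(z-w)\) lies in \(H^2\), and both use the identity \(zh=f+wh\) to produce a nonzero element of \(M\cap zM\), which contradicts Theorem~\ref{thm:noshift}.
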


\begin{proof}
The quotient belongs to \(H^2\): division by a single zero amounts to removing a Blaschke factor and multiplying by the bounded function \((1-\overline w z)^{-1}\). If \(h(z)=f(z)/(z-w)\) also belonged to \(M\), then \(zh=f+wh\in M\), contradicting Theorem \ref{thm:noshift}.
\end{proof}

For \(n\ge0\), define
\[
 \Omega_n=\{z\in\C:\phi_a^{\circ n}(z)\in\D\}
 =\{z\in\C:|z-(1-a^{-n})|<a^{-n}\}.
\]
Then \(\Omega_0=\D\), and the disks increase with \(n\).
For \(n\ge1\), they contain
\(\overline{\D}\setminus\{1\}\), lie in
\(\{z:\operatorname{Re}z<1\}\), and have boundaries tangent
at \(1\).

\begin{proposition}
\label{prop:generators}
Let \(M\) be a minimal \(T\)-invariant subspace. For every \(0\ne f\in M\) and \(N\ge1\), the vector \(T^Nf\) generates \(M\), extends holomorphically to \(\Omega_N\), and has a purely Blaschke inner factor. If \(M\) contains a nonzero function with only finitely many zeros in \(\D\), then \(M\) has an outer generator.
\end{proposition}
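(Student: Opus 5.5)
The plan is to prove the three assertions about \(T^Nf\) in turn, then derive the outer-generator claim.

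\emph{Generation.} Since \(T\) is injective and \(f\ne0\), we have \(T^Nf\ne0\). Because \(M\) is \(T\)-invariant, \(T^Nf\in M\). Minimality then gives \(\mathcal M_{T^Nf}=M\), as noted in the introduction.

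\emph{Holomorphic extension.} By \eqref{eq:iterate}, \(T^Nf=f\circ\phi_a^{\circ N}\). The map \(\phi_a^{\circ N}(z)=1-a^N(1-z)\) is an entire affine map, and by the definition of \(\Omega_N\) it sends \(\Omega_N\) into \(\D\). So the composition defines a holomorphic function on \(\Omega_N\) that agrees with \(T^Nf\) on \(\D\subset\Omega_N\).

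\emph{Inner factor.} Write \(g=T^Nf=BSO\). The closed disk \(\overline{\D}\setminus\{1\}\) lies in \(\Omega_N\) for \(N\ge1\), so \(g\) is holomorphic across every boundary point except \(1\).

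The first step is to show that the singular measure \(\mu\) of \(S\) is carried by \(\{1\}\). Near any \(\zeta\in\T\setminus\{1\}\), \(g\) is analytic across an arc and not identically zero. Hence \(|g|\) is bounded near the arc, and \(\log|g|\) on the arc is the restriction of a function whose only singularities are finitely many zeros locally. A singular inner factor supported on an open arc would force the boundary values of \(|g|\) to tend to \(0\) along radii at \(\mu\)-almost every point of the arc, which means \(g\) would vanish identically near some point. More cleanly, one can invoke the standard fact that if \(g\in H^2\) extends analytically across an open arc \(I\), then its inner factor extends analytically across \(I\) apart from isolated zeros, so \(\mu(I)=0\). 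Thus \(S=S_c\) for some \(c\ge0\).

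If \(c>0\), then \(g=S_c\cdot(BO)\), and \(BO\in H^2\). So \(g\in S_cH^2\) is a nonzero generator of the minimal space \(M\), contradicting \cite[Theorem 21]{CN}. Hence \(c=0\) and the inner factor is purely Blaschke.

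The main obstacle I expect is justifying cleanly that analyticity across arcs kills the singular measure there. I would cite the classical result (e.g.\ in Duren, or via the reflection principle applied to the inner factor \(\Theta=g/O\)) rather than reprove it.

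\emph{Outer generator.} Suppose \(0\ne h\in M\) has finitely many zeros \(w_1,\dots,w_k\) in \(\D\), counted with multiplicity. The Blaschke factor of \(T h\) has zeros exactly at the preimages under \(\phi_a\) of those \(w_j\) lying in \(\phi_a(\D)\), so it is a finite Blaschke product. Since \(\phi_a^{\circ n}(\D)\) is the disk of radius \(a^n\) centred at \(1-a^n\), which shrinks to \(\{1\}\), each \(w_j\in\D\) is eventually outside \(\phi_a^{\circ n}(\D)\). For \(n\) large enough, \(T^nh=h\circ\phi_a^{\circ n}\) therefore has no zeros in \(\D\). By the part already proved, the inner factor of \(T^nh\) is purely Blaschke, and with no zeros it is constant. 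So \(T^nh\) is outer, and by the first step it generates \(M\).
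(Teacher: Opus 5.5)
Your proposal is correct and follows the paper's proof essentially step for step. You use injectivity of \(T\) plus minimality for generation, composition with \(\phi_a^{\circ N}\) for the extension to \(\Omega_N\), the classical fact that analyticity across \(\T\setminus\{1\}\) confines the singular measure to \(\{1\}\) together with \cite[Theorem~21]{CN} to exclude an atom at \(1\), and the shrinking disks \(\phi_a^{\circ n}(\D)\) to make some iterate zero-free and hence outer. One caution: your preliminary heuristic, that a singular factor on an arc would force \(g\) to vanish identically, is not right as stated, because the outer factor and possible atoms at boundary zeros of \(g\) on \(\T\setminus\{1\}\) must be accounted for; relying on the cited classical result, as you propose and as the paper does, is the right call.
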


\begin{proof}
Composition with \(\phi_a^{\circ N}\) extends \(T^Nf\) to \(\Omega_N\). The operator \(T\) is injective, so \(T^Nf\ne0\); minimality gives \(\mathcal M_{T^Nf}=M\). A standard consequence of canonical Hardy-space factorization is that the singular inner measure of a function analytic through \(\T\setminus\{1\}\) can only be supported at \(1\) \cite{Duren}. The atomic-factor obstruction \cite[Theorem 21]{CN} rules out mass there. Thus the singular inner factor is trivial. If \(f\) has finitely many zeros, then their distance from \(1\) is positive. For all sufficiently large \(N\), the shrinking disk \(\phi_a^{\circ N}(\D)\) misses those zeros, so \(T^Nf\) is zero-free in \(\D\). It has no singular inner factor by the first part; hence its inner factor is constant and it is outer, up to a nonzero scalar.
\end{proof}

\subsection{Non-membership in Hardy classes $H^p$, $p>2$}
\label{sec:hardy}

We will need a compactness criterion recorded in
\cite[Result~1.5.1]{Jarchow}: a composition operator
\(C_\phi:H^p\to H^q\), with \(0<q<p\le\infty\),
is compact whenever the boundary contact set 
\[
E_\phi=\{\zeta\in\T:|\phi(\zeta)|=1\}
\] 
has arc-length measure zero. For our affine symbol, we have $E_{\phi_a}=\{1\}$ for all \(0<a<1\). Consequently, \(T:H^p\to H^2\) is compact for every \(p>2\). 
\begin{theorem}
\label{thm:generic}
Suppose \(M\) is infinite-dimensional and minimal. There is a dense \(G_\delta\) subset \(\mathcal R\subset M\) such that every \(f\in\mathcal R\) satisfies
\[
 T^nf\notin H^p
 \quad\text{for every }n\ge0\text{ and every }p>2.
\]
In particular, functions bounded on some tangent disk \(\phi_a^{\circ N}(\D)\) form a meagre subset of \(M\).
\end{theorem}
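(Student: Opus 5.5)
Since \(T^nf\in H^p\) for some \(p>2\) forces \(T^nf\in H^{p'}\) for every \(2<p'<p\), and since \(H^p\) decreases in \(p\), it suffices to handle \(p=p_k=2+1/k\), \(k\ge1\). The plan is a Baire category argument in the Hilbert space \(M\). For fixed \(n\ge0\), \(k\ge1\) and \(j\ge1\), set
\[
 F_{n,k,j}=\{f\in M: T^nf\in H^{p_k},\ \norm{T^nf}_{p_k}\le j\}.
\]
The exceptional set is \(\bigcup_{n,k,j}F_{n,k,j}\). It is enough to show that each \(F_{n,k,j}\) is closed and nowhere dense in \(M\). Then \(\mathcal R=M\setminus\bigcup F_{n,k,j}\) is a dense \(G_\delta\) set by the Baire category theorem, since \(M\) is a closed subspace of \(H^2\) and hence complete.

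\emph{Closedness.} Suppose \(f_i\in F_{n,k,j}\) and \(f_i\to f\) in \(H^2\). Then \(T^nf_i\to T^nf\) in \(H^2\), so they converge locally uniformly in \(\D\). The \(H^{p_k}\) integral means \(M_{p_k}(r,T^nf_i)\) are bounded by \(j\). By Fatou's lemma in \(r\), together with pointwise convergence on each circle \(|z|=r\), we get \(M_{p_k}(r,T^nf)\le j\) for all \(r\). Hence \(f\in F_{n,k,j}\).

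\emph{Empty interior.} This is the main step. Suppose \(F_{n,k,j}\) contains a ball \(f_0+\varepsilon\,\mathrm{ball}(M)\). It is convex and symmetric about \(f_0\), so differences show that \(T^n\) maps \(\varepsilon\,\mathrm{ball}(M)\) into the \(2j\)-ball of \(H^{p_k}\). Thus \(T^n|_M:M\to H^{p_k}\) is bounded. If \(n\ge1\), factor \(T^{n}|_M=T\circ T^{n-1}|_M\), where \(T^{n-1}|_M\) maps boundedly into \(H^{p_k}\); to obtain this, use the \(n-1\) case, or run the argument with \(T^{n}\) replaced by the map \(f\mapsto T^{n}f\) and compose once more with \(T\). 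The cleanest route is: boundedness of \(T^n|_M:M\to H^{p_k}\) and compactness of \(T:H^{p_k}\to H^2\), from the cited criterion, give that \(T^{n+1}|_M:M\to M\) is compact. \(M\) is \(T\)-invariant, and \(T^{n+1}M\subseteq M\). Then \(V=T|_M\) is a bounded operator on the infinite-dimensional space \(M\) with a compact power \(V^{n+1}\), and \(V\) is injective since \(T\) is. By the Lomonosov-type fact, an operator commuting with a nonzero compact operator has a nontrivial invariant subspace. \(V\) commutes with \(V^{n+1}\), which is nonzero because \(V\) is injective. So \(V\) has a nontrivial invariant subspace, for instance \(\ker(V^{n+1}-\lambda)\) for a nonzero eigenvalue \(\lambda\), or \(\overline{\operatorname{Ran}V^{n+1}}\) in the quasinilpotent case. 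Even without Lomonosov, the invariant subspaces of the compact \(V^{n+1}\) (Aronszajn--Smith) yield the contradiction, because any closed \(V^{n+1}\)-hyperinvariant subspace produced this way is \(V\)-invariant. This contradicts minimality with \(\dim M=\infty\). The delicate point I expect is exactly this passage from a compact power to a proper invariant subspace for \(V\) itself. I would use the eigenspace argument when \(\sigma(V^{n+1})\ne\{0\}\): eigenspaces of a compact operator for nonzero eigenvalues are finite-dimensional and \(V\)-invariant, so they contain a \(V\)-eigenvector, which is impossible by minimality. In the remaining quasinilpotent case I would invoke Lomonosov's theorem.

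Finally, the ``in particular'' clause follows because a function bounded on \(\phi_a^{\circ N}(\D)\) has \(T^Nf\in H^\infty\subset H^3\), so it lies outside \(\mathcal R\), a set whose complement is meagre.
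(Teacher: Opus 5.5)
Your proposal is correct and is essentially the paper's proof. Both run a Baire argument over the closed sets \(\{f\in M:\norm{T^nf}_p\le k\}\), with \(p\) along a sequence decreasing to \(2\) and Fatou giving closedness; both use compactness of \(T:H^p\to H^2\) to make \((T|_M)^{n+1}\) a nonzero compact operator commuting with \(T|_M\), so that Lomonosov contradicts minimality.

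Two of your asides are wrong and should be dropped. First, \(\overline{\operatorname{Ran}V^{n+1}}\) is never proper here, because minimality forces \(V\) to have dense range. Second, Aronszajn--Smith gives \(V^{n+1}\)-invariant rather than hyperinvariant subspaces, so it cannot replace Lomonosov. Your final route (eigenspaces for nonzero spectrum, Lomonosov in the quasinilpotent case) is fine.
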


\begin{proof}
Fix \(p>2\), \(n\ge0\), and \(k\ge1\), and put
\[
 E_{p,n,k}=\{f\in M:T^nf\in H^p,\
                          \norm{T^nf}_p\le k\}.
\]
This set is closed in the \(H^2\) topology. Indeed, \(H^2\) convergence implies local uniform convergence; after applying \(T^n\), Fatou's lemma on each concentric circle preserves the \(H^p\) bound. If \(E_{p,n,k}\) had a nonempty interior, subtraction and scaling would give a constant \(C\) such that
\begin{equation}
 \norm{T^nf}_p\le C\norm{f}_2
 \qquad(f\in M).
 \label{eq:hpbound}
\end{equation}
The compactness recalled above would make
\(T^{n+1}|_M=T\circ(T^n|_M)\) compact. Put \(A=T|_M\). The compact operator \(A^{n+1}\) is nonzero
because \(T\) is injective, and it commutes with \(A\).
Moreover, \(A\) is not a scalar operator: otherwise every
closed subspace of the infinite-dimensional space \(M\)
would be invariant. Lomonosov's theorem therefore yields
a proper nonzero closed \(A\)-invariant subspace,
contradicting minimality \cite{Lomonosov}. Thus each \(E_{p,n,k}\) is nowhere dense. Take a countable sequence \(p_j>2\) decreasing to \(2\). Baire's theorem shows that the complement of \(\bigcup_{j,n,k}E_{p_j,n,k}\) is a dense \(G_\delta\). If a vector or any of its iterates belonged to some \(H^p\) with \(p>2\), it would belong to \(H^{p_j}\) for a suitable \(p_j<p\), contrary to its choice. Finally, boundedness of \(f\) on \(\phi_a^{\circ N}(\D)\) means \(T^Nf\in H^\infty\subset H^{p_j}\), so such vectors lie in the meagre union already identified.
\end{proof}

For \(f\in\mathcal R\), the function \(Tf\) extends analytically across \(\T\setminus\{1\}\) by Proposition \ref{prop:generators}, but \(Tf\notin H^p\) for any \(p>2\). Thus this failure of higher integrability is associated with behavior near the boundary point \(1\). The conclusion is generic: it does not determine whether a
minimal subspace can contain an exceptional nonzero vector
in some \(H^p\), \(p>2\).

\subsection{Minimal models with radial vanishing}
\label{sec:radial}

Assuming that the ISP fails, we construct minimal invariant
subspaces whose vectors exhibit uniform radial decay at \(1\)
of any prescribed finite order. These spaces still satisfy Theorem~\ref{thm:generic}, so
rapid radial decay at $1$ can coexist with generic
non-membership in every \(H^p\), \(p>2\).

\begin{theorem}
\label{thm:radial}
Assume that \(T\) has an infinite-dimensional minimal invariant subspace. For every \(m>0\) there is an infinite-dimensional minimal invariant subspace \(N_m\) and a constant \(C_m\) such that
\begin{equation}
 |f(r)|\le C_m\norm{f}_2(1-r)^m
 \qquad(f\in N_m,\ 0<r<1).
 \label{eq:radial}
\end{equation}
Moreover, the operator series
\(\sum_{n\ge0}(T|_{N_m})^n\) converges absolutely in operator
norm. In particular, every vector in \(N_m\) has an
absolutely summable \(H^2\) orbit.
\end{theorem}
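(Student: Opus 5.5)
The plan is to realize, inside the universal operator \(T-\lambda I\), a minimal restriction of \(T\) whose powers decay geometrically at the rate \(q=a^m\). I would then convert this operator-norm decay into radial decay at \(1\), using the iterate formula \eqref{eq:iterate} together with the \(H^2\) point-evaluation bound.

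\textbf{Step 1: an ISP counterexample.} By hypothesis there is an infinite-dimensional minimal \(M\). Then \(A=T|_M\), transported to \(H\) by a unitary, has no proper nonzero closed invariant subspace. The Hilbert-space ISP therefore fails.

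\textbf{Step 2: a minimal restriction of \(T\) with small norm.} Fix \(m>0\) and put \(q=a^m\in(0,1)\). I would mimic the proof of Theorem~\ref{thm:multiplicity}, but apply universality to \(U=T-\lambda I\) with a small parameter \(\lambda\). Choose \(0<|\lambda|<q/2\); then \(U\) is universal by \cite[Theorem 15]{CN}. Choose \(L>0\) so large that
\[
 \frac{\norm{U}\,\norm{A}}{L}<\frac q2 .
\]
Apply universality to \(D=A\oplus LI\). This gives \(\alpha\ne0\), a closed \(U\)-invariant \(F\), and an isomorphism \(J\) with \(UJ=\alpha JD\).
\begin{itemize}
 \item The second summand makes \(\alpha L\) an eigenvalue of \(U\). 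Hence \(|\alpha|L\le\norm U\), and so \(\norm{\alpha A}<q/2\).
 \item Put \(N_m=J(H\oplus0)\). This subspace is closed and \(U\)-invariant, hence \(T\)-invariant.
 \item On it, \(T|_{N_m}=J_0(\lambda I+\alpha A)J_0^{-1}\), where \(J_0=J|_{H\oplus0}\).
\end{itemize}
The operator \(\lambda I+\alpha A\) has the same invariant subspaces as \(A\), since \(\alpha\neq0\). So \(N_m\) is minimal and infinite-dimensional. Moreover \(\norm{\lambda I+\alpha A}<q\), which gives
\[
 \norm{(T|_{N_m})^n}\le\kappa q^n,\qquad \kappa=\norm J\norm{J^{-1}}.
\]
Because \(q<1\), the operator series \(\sum_n (T|_{N_m})^n\) converges absolutely in norm, and orbit summability follows at once.

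\textbf{Step 3: from operator decay to radial decay.} Given \(\rho\in(0,1)\), I would choose \(n\ge0\) and \(r\in[0,1-a]\) with \(1-\rho=a^n(1-r)\). This is possible because \(1-r\) ranges over \([a,1]\). By \eqref{eq:iterate}, \(f(\rho)=(T^nf)(r)\). The point-evaluation bound in \(H^2\) then gives
\[
 |f(\rho)|\le\frac{\norm{T^nf}_2}{\sqrt{1-r^2}}
 \le\frac{\kappa q^n\norm f_2}{\sqrt{1-(1-a)^2}} .
\]
Since \(a^n\le(1-\rho)/a\), we have \(q^n=(a^n)^m\le a^{-m}(1-\rho)^m\). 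Hence
\[
 |f(\rho)|\le C_m\norm f_2(1-\rho)^m,
 \qquad
 C_m=\frac{\kappa\, a^{-m}}{\sqrt{1-(1-a)^2}} .
\]
This constant is uniform in \(f\in N_m\).

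\textbf{Main obstacle.} The main point to check is Step 2: the uncontrolled scalar \(\alpha\) from universality must be tamed for \(T\) itself, not only for \(T-\lambda I\). The eigenvalue trick bounds \(|\alpha|\). Choosing \(|\lambda|\) small, which \cite[Theorem 15]{CN} permits since \(\lambda\) may be arbitrarily close to \(0\), then ensures that the shift back by \(\lambda\) does not spoil the norm bound. Everything else is routine.
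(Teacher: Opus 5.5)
Your proof is correct. The overall strategy matches the paper's: realize a restriction of \(T\) similar to \(\lambda I+(\text{small})\cdot A\) inside the universal operator \(T-\lambda I\), then convert operator decay into radial decay. Your Step 3 is the paper's radial argument, with the same splitting \(1-\rho=a^n(1-r)\), \(r\in[0,1-a]\), and the same constant \(C_m\).

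The one genuine difference is how you tame the scalar \(\alpha\).

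\textbf{The paper's route.} It applies universality of \(T-\lambda I\) to the countable sum \(B=\bigoplus_{j\ge1}A/j\). This gives a whole sequence of minimal models \(N_j\) with \(T|_{N_j}\sim\lambda I+\alpha A/j\). The scalar \(\alpha\) is left uncontrolled, and \(j\) is chosen afterwards so that \(\lambda+|\alpha|\norm{A}/j<q\).

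\textbf{Your route.} You reuse the eigenvalue device of Theorem~\ref{thm:multiplicity}. The summand \(LI\) makes \(\alpha L\) an eigenvalue of \(T-\lambda I\), which forces \(|\alpha|\le\norm{T-\lambda I}/L\) before anything else is chosen. So a single copy of \(A\) suffices, and the smallness of \(\alpha A\) is fixed in advance.

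\textbf{Comparison.} Your version gives an explicit a priori bound on \(|\alpha|\). It also yields the invertibility of Corollary~\ref{cor:radialinvertible} at no extra cost: choose \(L\) so large that \(\norm{T-\lambda I}\norm{A}/L<|\lambda|\). Then \(\norm{\alpha A}<|\lambda|<q/2\), so \(\lambda I+\alpha A\) is invertible by the Neumann series and still has norm less than \(q\). The paper's version needs no eigenvalue argument. From one application of universality it produces the family \((N_j)\) that Corollary~\ref{cor:radialinvertible} later refers to.
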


\begin{proof}
Let \(M_0\) be an infinite-dimensional minimal subspace and set \(A=T|_{M_0}\). The operator \(A\) has no proper nonzero closed invariant subspace. Fix \(m>0\) and put \(q=a^m<1\). Choose a real \(\lambda\) with \(0<\lambda<q\); it also satisfies \(\lambda<a^{-1/2}\). On the Hilbert direct sum \(K=\bigoplus_{j\ge1}M_0\), consider the bounded operator
\[
 B=\bigoplus_{j\ge1}\frac Aj.
\]
Identify \(K\) unitarily with \(H^2\) to apply the affine universality result from the introduction. It provides a nonzero scalar \(\alpha\), a closed invariant subspace \(F\subseteq H^2\), and a bounded isomorphism \(J:K\to F\) such that
\begin{equation}
 (T-\lambda I)J=\alpha JB.
 \label{eq:model}
\end{equation}
Let \(K_j\) denote the \(j\)th coordinate copy of \(M_0\) and put \(N_j=JK_j\). Each \(N_j\) is closed and \(T\)-invariant, and \(T|_{N_j}\) is similar to \(\lambda I+\alpha A/j\). The latter operator has exactly the invariant subspaces of \(A\); thus every \(N_j\) is infinite-dimensional and minimal. Choose \(j\) large enough that \(\lambda+|\alpha|\norm{A}/j<q\), and set \(N_m=N_j\). Writing \(\kappa=\norm{J}\norm{J^{-1}}\), equation \eqref{eq:model} gives
\begin{equation}
 \norm{T^n|_{N_m}}\le\kappa q^n=\kappa a^{nm}
 \qquad(n\ge0).
 \label{eq:powerdecay}
\end{equation}
The last assertion of the theorem follows immediately by summing this geometric bound. For the radial estimate, given \(0<r<1\), choose \(n\ge0\) with
\(a^{n+1}<1-r\le a^n\), and define
\[
 u=1-\frac{1-r}{a^n}\in[0,1-a].
\]
Then \(r=\phi_a^{\circ n}(u)\). The \(H^2\) point-evaluation bound on the fixed interval \([0,1-a]\) and \eqref{eq:powerdecay} yield
\[
 |f(r)|=|(T^nf)(u)|
 \le\frac{\kappa}{\sqrt{1-(1-a)^2}}\,a^{nm}\norm{f}_2.
\]
Since \(a^{n+1}<1-r\), we have \(a^{nm}<a^{-m}(1-r)^m\). This proves \eqref{eq:radial}, with a constant independent of \(f\) and \(r\).
\end{proof}

\begin{corollary}
\label{cor:radialinvertible}
Under the hypothesis of Theorem~\ref{thm:radial},
\(T|_{N_j}\) is invertible for every sufficiently large \(j\),
where \((N_j)\) is the family constructed in its proof.
\end{corollary}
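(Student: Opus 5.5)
From the proof of Theorem~\ref{thm:radial}, each restriction \(T|_{N_j}\) is similar, through the isomorphism \(J_j:=J|_{K_j}:K_j\to N_j\), to the operator \(\lambda I+\alpha A/j\) on \(M_0\). The plan is to show that this operator is invertible once \(j\) is large, and then transfer invertibility back to \(T|_{N_j}\) through the similarity.

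First I would confirm the similarity precisely. Restricting \eqref{eq:model} to \(K_j\) gives
\[
 (T-\lambda I)J_j=\alpha J_j(A/j).
\]
Since \(J\) is a bounded isomorphism onto the closed subspace \(F\), the map \(J_j\) is a bounded bijection onto the closed subspace \(N_j\), with bounded inverse. Rearranging,
\[
 T|_{N_j}=J_j\bigl(\lambda I+\alpha A/j\bigr)J_j^{-1}.
\]

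Next comes the Neumann series step. The scalar \(\lambda\) was chosen real with \(\lambda>0\). Write
\[
 \lambda I+\alpha A/j=\lambda\bigl(I+\alpha A/(\lambda j)\bigr).
\]
Whenever \(j>|\alpha|\norm{A}/\lambda\), we have \(\norm{\alpha A/(\lambda j)}<1\), so the bracket is invertible by the Neumann series, exactly as \(B=cI+A\) was inverted in the proof of Theorem~\ref{thm:multiplicity}. This gives
\[
 \norm{(\lambda I+\alpha A/j)^{-1}}\le\frac{1}{\lambda-|\alpha|\norm{A}/j}.
\]

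Conjugating by \(J_j\) then shows that \(T|_{N_j}\) is a bijection of \(N_j\) with bounded inverse
\[
 J_j(\lambda I+\alpha A/j)^{-1}J_j^{-1}.
\]
The threshold \(j>|\alpha|\norm{A}/\lambda\) is compatible with the earlier choice \(\lambda+|\alpha|\norm{A}/j<q\), since both hold for all large \(j\). Hence \(N_m\) can also be taken with \(T|_{N_m}\) invertible, which matches the claim in the introduction.

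There is no real obstacle here. The only point needing care is to use \(\lambda\ne0\): this is exactly why the proof of Theorem~\ref{thm:radial} chose \(\lambda>0\) rather than \(\lambda=0\). Without it, \(\alpha A/j\) alone could fail to be invertible, since \(A=T|_{M_0}\) need not be.
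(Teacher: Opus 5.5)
Your proof is correct and matches the paper's argument. Both pass through the similarity \(T|_{N_j}\sim\lambda I+\alpha A/j\) coming from \eqref{eq:model}, invert it by the Neumann series once \(|\alpha|\norm{A}/j<\lambda\), and transfer invertibility back through the similarity; the paper takes \(|\alpha|\norm{A}/j<\min\{\lambda,q-\lambda\}\) so that the decay estimate holds for the same \(j\), which is the compatibility you noted.

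One small correction to your closing remark concerns why \(\lambda\ne0\). It was already forced in Theorem~\ref{thm:radial}, because the cited universality of \(T-\lambda I\) requires \(0<|\lambda|<a^{-1/2}\) (indeed \(T\) is injective, hence not universal). So invertibility is a by-product of that choice rather than its motivation.
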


\begin{proof}
Use the notation of Theorem \ref{thm:radial}: \(q=a^m\), \(0<\lambda<q\), \(A=T|_{M_0}\), and
\[
 T|_{N_j}\sim B_j:=\lambda I+\frac{\alpha}{j}A.
\]
Choose \(j\) large enough that
\[
 \eta_j:=\frac{|\alpha|\norm{A}}j
 <\min\{\lambda,q-\lambda\}.
\]
This is possible because \(\eta_j\to0\). The inequality \(\lambda+\eta_j<q\) is precisely the condition required for the forward decay and radial estimates in the theorem. The additional inequality \(\eta_j<\lambda\) makes \(B_j\) invertible by the Neumann series:
\[
 B_j^{-1}
 =\frac1\lambda\sum_{k=0}^{\infty}
 \left(-\frac{\alpha A}{j\lambda}\right)^k,
 \qquad
 \norm{B_j^{-1}}\le\frac1{\lambda-\eta_j}.
\]
Similarity transfers invertibility to \(T|_{N_j}\). Setting \(N_m=N_j\) preserves all the conclusions of Theorem \ref{thm:radial}. With \(\kappa=\norm{J}\norm{J^{-1}}\), it also gives
\[
 \norm{(T|_{N_m})^{-n}}
 \le\kappa(\lambda-\eta_j)^{-n}
 \qquad(n\ge0).
\]
Thus invertibility is available within the original concrete family, simply by choosing \(j\) sufficiently large.
\end{proof}
Futhermore, vectors in an invertible minimal model extend analytically to a half-plane.
\begin{proposition}
\label{prop:halfplane}
Let \(M\subseteq H^2\) be a closed \(T\)-invariant subspace, and suppose that \(A=T|_M\) is invertible. Every \(f\in M\) has a unique holomorphic extension to
\[
 \mathcal H=\{z\in\C:\operatorname{Re}z<1\}
\]
which we continue to denote by \(f\). For every compact \(K\subset\mathcal H\), there is a constant \(C_K\) such that
\begin{equation}
 \sup_{z\in K}|f(z)|\le C_K\norm{f}_2
 \qquad(f\in M).
 \label{eq:halfplanebound}
\end{equation}
\end{proposition}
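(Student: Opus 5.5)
The idea is to use the inverse of \(A=T|_M\) to pull each \(f\in M\) back along the affine dynamics. By the open mapping theorem, \(A^{-1}\) is bounded on \(M\). For \(n\ge0\) put \(g_n=A^{-n}f\in M\), so that \(g_n\in H^2\), \(\norm{g_n}_2\le\norm{A^{-1}}^n\norm{f}_2\), and \(f=T^ng_n=g_n\circ\phi_a^{\circ n}\) on \(\D\). This identity suggests defining
\[
 F_n(z)=g_n\bigl(\psi_n(z)\bigr),\qquad \psi_n(z)=(\phi_a^{\circ n})^{-1}(z)=1-a^{-n}(1-z).
\]
Then \(F_n\) is holomorphic on the disk \(\psi_n^{-1}(\D)=\phi_a^{\circ n}(\D)\)? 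No, more precisely: \(F_n\) is holomorphic wherever \(\psi_n(z)\in\D\), i.e. on \(\{z:|z-(1-a^n)|<a^n\}\). That set is the wrong direction, so I reverse roles. Since \(g_n\circ\phi_a^{\circ n}=f\), the function \(f\) is the restriction of \(g_n\circ\phi_a^{\circ n}\). The latter is holomorphic on \(\Omega_n=\{z:\phi_a^{\circ n}(z)\in\D\}\), the disk of radius \(a^{-n}\) centred at \(1-a^{-n}\) introduced before Proposition~\ref{prop:generators}. Set \(f_n=g_n\circ\phi_a^{\circ n}\) on \(\Omega_n\).

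The first step is compatibility. On \(\Omega_n\subseteq\Omega_{n+1}\) we have \(f_{n+1}=g_{n+1}\circ\phi_a^{\circ(n+1)}=(Tg_{n+1})\circ\phi_a^{\circ n}=g_n\circ\phi_a^{\circ n}=f_n\), using \(Tg_{n+1}=g_n\). All \(f_n\) agree with \(f\) on \(\D\), so by the identity theorem they patch to a single holomorphic function on \(\bigcup_n\Omega_n\). This union is exactly \(\mathcal H\): a point \(z\) with \(\operatorname{Re}z<1\) satisfies \(|z-1+a^{-n}|^2<a^{-2n}\) iff \(|1-z|^2<2a^{-n}\operatorname{Re}(1-z)\), which holds for large \(n\). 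Uniqueness follows from the identity theorem, since \(\mathcal H\) is connected and contains \(\D\). The same argument shows that the extension is linear in \(f\).

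For the bound \eqref{eq:halfplanebound}, fix a compact \(K\subset\mathcal H\). The sets \(\Omega_n\) are increasing open sets covering \(\mathcal H\), so \(K\subset\Omega_N\) for some \(N\). In fact \(K\) lies in a slightly smaller closed disk, and \(\phi_a^{\circ N}(K)\) is a compact subset of \(\D\) contained in \(\{|w|\le\rho\}\) with \(\rho<1\). The \(H^2\) point-evaluation bound \(|g(w)|\le\norm{g}_2(1-|w|^2)^{-1/2}\) then gives
\[
 \sup_{z\in K}|f(z)|\le\sup_{|w|\le\rho}|g_N(w)|\le\frac{\norm{A^{-1}}^N}{\sqrt{1-\rho^2}}\norm{f}_2,
\]
which is the required estimate with \(C_K=\norm{A^{-1}}^N(1-\rho^2)^{-1/2}\).

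The main point needing care is the geometric exhaustion: showing that \(\bigcup\Omega_n=\mathcal H\), and that compactness forces \(\phi_a^{\circ N}(K)\) into a disk of radius \(\rho<1\). Compactness of \(\phi_a^{\circ N}(K)\subset\D\) gives this immediately. Everything else is bookkeeping with the invertibility of \(A\) and the identity \(T^n g_n=f\).
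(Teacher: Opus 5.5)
Your proof is correct and matches the paper's argument essentially step for step: set \(g_n=A^{-n}f\), extend \(f\) by \(g_n\circ\phi_a^{\circ n}\) on the increasing disks \(\Omega_n\) whose union is \(\mathcal H\), and bound on a compact \(K\subset\Omega_N\) using the Hardy-space point-evaluation estimate. Checking \(f_{n+1}=f_n\) pointwise on \(\Omega_n\) and using the constant \(\norm{A^{-1}}^N\) instead of \(\norm{A^{-N}}\) are harmless variations, but you should delete the abandoned false start with \(\psi_n\) from the final write-up.
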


\begin{proof}
For \(n\ge0\), put \(g_n=A^{-n}f\), and use the disks
\(\Omega_n\) defined in Subsection~\ref{sec:zeros}. On \(\Omega_n\), define
\[
 F_n(z)=g_n\bigl(\phi_a^{\circ n}(z)\bigr).
\]
On \(\D\), one has \(F_n=T^ng_n=f\). The disks are increasing, so the identity theorem shows that these extensions agree on their overlaps. To identify their union, write \(w=1-z\). Then
\[
 z\in\Omega_n
 \quad\Longleftrightarrow\quad
 |w|^2<2a^{-n}\operatorname{Re}w.
\]
This holds for some \(n\) precisely when \(\operatorname{Re}w>0\). Hence \(\bigcup_{n\ge0}\Omega_n=\mathcal H\), and the agreed extensions define a holomorphic function there. Uniqueness follows from agreement on \(\D\). For a nonempty compact \(K\subset\mathcal H\), choose \(n\) with \(K\subset\Omega_n\) and put \(r_K=\max_{z\in K}|\phi_a^{\circ n}(z)|<1\). The Hardy-space point-evaluation estimate gives
\[
 \sup_{z\in K}|F_n(z)|
 \le\frac{\norm{A^{-n}}}{\sqrt{1-r_K^2}}\,\norm{f}_2.
\]
This proves \eqref{eq:halfplanebound}, with a constant independent of \(f\).
\end{proof}

Proposition \ref{prop:halfplane} applies to the invertible models chosen in Corollary \ref{cor:radialinvertible}, so their vectors extend holomorphically to \(\mathcal H\). 

\begin{corollary}
\label{cor:summable}
The following statements are equivalent:
\begin{flushenumerate}[iv]
 \item the Hilbert-space ISP has an affirmative answer;
 \item whenever \(f\in H^2\) has \(\sum_{n\ge0}\norm{T^nf}_2<\infty\) and \(\Mf\) is minimal, \(\Mf\) is one-dimensional;
 \item whenever \(\sum_{n\ge0}T^nf\) converges in \(H^2\) and \(\Mf\) is minimal, \(\Mf\) is one-dimensional;
 \item for every \(0\ne f\in H^2\) satisfying \(\lim_{r\uparrow1}f(r)=0\), \(\Mf\) is minimal if and only if it is one-dimensional.
\end{flushenumerate}
\end{corollary}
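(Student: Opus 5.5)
The plan is to prove the cycle (i)\(\Rightarrow\)(ii)\(\Rightarrow\)(iii)\(\Rightarrow\)(i), and then to handle (iv) separately through (i)\(\Rightarrow\)(iv)\(\Rightarrow\)(i).

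For (i)\(\Rightarrow\)(ii), (iii), (iv), use the equivalence recorded in the introduction. If the ISP holds, then every minimal invariant subspace of any operator is one-dimensional. Otherwise the restriction of the operator to that subspace would be a counterexample, since a minimal infinite-dimensional space has no proper nonzero closed invariant subspace. So each "whenever \(\Mf\) is minimal" conclusion holds trivially. For (iv), the converse direction is the only one needing comment: a one-dimensional \(\Mf\) is automatically minimal, so (iv) reduces to the ISP statement.

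For (ii)\(\Rightarrow\)(iii), note that if \(\sum T^nf\) converges in \(H^2\), then in particular \(T^nf\to0\). This is not absolute summability, so the implication is not immediate. The cleaner route is to prove each of (ii), (iii), (iv) implies (i) by contraposition, all via Theorem~\ref{thm:radial}.

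Assume the ISP fails. By \cite[Theorem 15]{CN}, \(T-\lambda I\) is universal, so \(T\) has an infinite-dimensional minimal invariant subspace, as noted in the introduction. Theorem~\ref{thm:radial}, with say \(m=1\), then gives an infinite-dimensional minimal \(N_1\). Pick any \(0\ne f\in N_1\); by minimality \(\Mf=N_1\). The following consequences each refute one of the statements:
\begin{itemize}
\item By \eqref{eq:powerdecay}, \(\sum_n\norm{T^nf}_2\le\kappa\sum_n a^n\norm{f}_2<\infty\), which refutes (ii).
\item Absolute convergence implies convergence of \(\sum T^nf\) in \(H^2\), which refutes (iii).
\item The radial estimate \eqref{eq:radial} gives \(|f(r)|\le C_1\norm{f}_2(1-r)\to0\) as \(r\uparrow1\). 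Here \(\Mf\) is minimal but not one-dimensional, which refutes (iv).
\end{itemize}

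Thus \(\neg\)(i) implies \(\neg\)(ii), \(\neg\)(iii) and \(\neg\)(iv), and combined with the first paragraph all four statements are equivalent. The one point needing care is that the radial-vanishing subspace in Theorem~\ref{thm:radial} requires an infinite-dimensional minimal \(T\)-subspace as input. That input is exactly what ISP failure plus \cite[Theorem 15]{CN} provides: universality realizes the counterexample as a restriction of \(T-\lambda I\), hence of \(T\) up to a scalar shift, and scalar shifts preserve invariant subspaces. Everything else is bookkeeping.
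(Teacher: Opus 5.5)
Your proof is correct and follows the paper's argument. Under (i), every minimal $T$-invariant subspace is one-dimensional. The paper cites \cite[Corollary 16]{CN} for this; your restriction argument also works once you note that a finite-dimensional minimal space of dimension at least two would contain an eigenvector, so that case is excluded too. Conversely, failure of the ISP supplies an infinite-dimensional minimal subspace to Theorem~\ref{thm:radial}, and the resulting space $N_m$ refutes (ii), (iii) and (iv) at once. The cycle you abandon in your opening paragraph does no harm, since the contrapositive argument already gives every implication back to (i).
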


\begin{proof}
Under (i), every minimal invariant subspace of \(T\) is one-dimensional \cite[Corollary 16]{CN}, so (ii) and (iii) follow. Since a one-dimensional invariant subspace is always minimal, (iv) follows as well. Statement (iii) implies (ii). If (i) fails, universality produces an infinite-dimensional minimal subspace for \(T\). Theorem \ref{thm:radial} then gives a minimal space \(N_m\), with \(m>0\), on which every orbit is absolutely summable and every vector has radial limit zero at \(1\). For every \(0\ne f\in N_m\), minimality gives \(\Mf=N_m\); hence \(f\) violates (ii), (iii), and (iv). Thus each of these statements implies (i).
\end{proof}

The corollary clarifies two open problems arising in
\cite{CEN,CN}. First, part~(iii) shows that the conjecture
in \cite[Section~3]{CEN} is equivalent to
an affirmative solution of the ISP. Second, part~(iv) reduces the full ISP to the radial-zero
case. Following \cite{CN}, distinguish generators whose
radial limit at \(1\) is finite and nonzero (A), is zero
(B), or does not exist (C). Any failure of the ISP would already produce
a minimal infinite-dimensional model whose every nonzero
generator belongs to case (B).
Thus cases (A) and (C) need not be treated separately.

\section{The topology of universal operators}
\label{sec:topology}

Recall the classes \(\Phi_+\), \(\Phi_-\), \(\mathcal C\),
\(\mathcal C_+\) and \(\mathcal U\) from
Section~\ref{sec:introduction}. Schroderus and Tylli observed
the norm openness of \(\mathcal C_+\) and showed that
\(\mathcal U\) is not norm open
\cite[Section 2 and Example 2.2]{ST}.
The closure formula for \(\mathcal C\) follows from
Niemiec's description of two-sided invertible orbits
\cite[Corollary 3.12(iii)(b)]{Niemiec}.
The common boundary of the semi-Fredholm index classes is
classical; see Mbekhta \cite{Mbekhta} and Niemiec
\cite[Theorem 5.2]{Niemiec}. Combining this topology with
the criteria of Caradus and Pozzi gives the following topological
description of the full universal class.  

\subsection{Norm closure, interior, and boundary}

The semi-Fredholm classes \(\Phi_+\) and \(\Phi_-\) are
norm open \cite[Theorem 5.2]{Niemiec}. The norm openness
of \(\mathcal C_+\) is also recorded in
\cite[Section 2]{ST}.

\begin{theorem}
\label{thm:topology}
With all closures, interiors, and boundaries taken in the operator norm,
\begin{align}
 \overline{\mathcal U}=\overline{\mathcal C_+}
   =\overline{\mathcal C}
   &=\mathcal B(H)\setminus\Phi_+,\label{eq:universalclosure}\\
 \operatorname{int}\mathcal U
   &=\mathcal C_+=\Phi_-\setminus\Phi_+,\label{eq:universalinterior}\\
 \partial\mathcal U
   &=\mathcal B(H)\setminus(\Phi_+\cup\Phi_-).
   \label{eq:universalboundary}
\end{align}
Consequently, \(\mathcal U\) is neither closed nor open, has nonempty interior, and is not norm dense. Its boundary is closed and nowhere dense.
\end{theorem}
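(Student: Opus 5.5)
The proof sandwiches \(\mathcal U\) between \(\mathcal C\) and the closed set \(\mathcal B(H)\setminus\Phi_+\), and then uses the classical fact that the non-semi-Fredholm operators are limits of semi-Fredholm ones.

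\emph{Closure.} First I show \(\mathcal U\subseteq\mathcal B(H)\setminus\Phi_+\). Apply universality of \(U\) to \(A=0\). This gives a closed \(U\)-invariant subspace \(M\), isomorphic to \(H\), with \(U|_M=cJ0J^{-1}=0\). Hence \(\dim\ker U=\infty\), so \(U\notin\Phi_+\). Combined with the Caradus and Pozzi criteria this gives the chain
\[
 \mathcal C\subseteq\mathcal C_+\subseteq\mathcal U\subseteq\mathcal B(H)\setminus\Phi_+ .
\]
The right-hand set is closed because \(\Phi_+\) is norm open. Niemiec's closure formula \(\overline{\mathcal C}=\mathcal B(H)\setminus\Phi_+\) then squeezes all the closures in \eqref{eq:universalclosure} to be equal.

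\emph{Interior.} The identity \(\mathcal C_+=\Phi_-\setminus\Phi_+\) is a rewriting of definitions. An operator in \(\Phi_-\) has closed range and \(\dim\ker A^*<\infty\); it fails to be in \(\Phi_+\) exactly when \(\dim\ker A=\infty\).

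\(\mathcal C_+\) is open: this is recorded in \cite{ST}, or follows because the index is norm-locally constant on \(\Phi_-\), so index \(+\infty\) persists nearby. Hence \(\mathcal C_+\subseteq\operatorname{int}\mathcal U\).

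For the reverse inclusion, take \(A\in\operatorname{int}\mathcal U\). Then \(A\notin\Phi_+\) by the first step. Suppose also \(A\notin\Phi_-\), so that \(A\in\Sigma\). Here I invoke the classical common-boundary result (Mbekhta; Niemiec, Theorem 5.2): every operator that is neither upper nor lower semi-Fredholm is a norm limit of operators in \(\Phi_+\). Such approximants lie outside \(\mathcal U\), contradicting \(A\in\operatorname{int}\mathcal U\). So \(A\in\Phi_-\setminus\Phi_+\). I expect this approximation \(\Sigma\subseteq\overline{\Phi_+}\) to be the main obstacle; I would cite it rather than reprove it.

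\emph{Boundary.} Compute
\[
 \partial\mathcal U=\overline{\mathcal U}\setminus\operatorname{int}\mathcal U=(\mathcal B(H)\setminus\Phi_+)\setminus(\Phi_-\setminus\Phi_+)=\mathcal B(H)\setminus(\Phi_+\cup\Phi_-).
\]
This set is closed as the complement of an open set. It is nowhere dense because, by the same classical fact, \(\Phi_+\cup\Phi_-\) is dense.

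\emph{Consequences.}
\begin{itemize}
\item \(\mathcal U\) has nonempty interior, since \(B_\infty\in\mathcal C\).
\item \(\mathcal U\) is not dense, since \(I\in\Phi_+\) and \(\Phi_+\) is open.
\item \(\mathcal U\) is not closed. An injective compact diagonal operator has non-closed range, so it lies in \(\overline{\mathcal U}\). It has trivial kernel, so it is not universal.
\item \(\mathcal U\) is not open, by \cite[Example~2.2]{ST}. Alternatively, exhibit a universal operator in \(\Sigma\), for instance \(B_\infty\oplus K\) with \(K\) injective compact diagonal. It is universal by the inheritance property. It is not in \(\Phi_-\), since its range is not closed.
\end{itemize}
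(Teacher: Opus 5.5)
Your proof is correct and has the same skeleton as the paper's: the sandwich \(\mathcal C\subseteq\mathcal C_+\subseteq\mathcal U\subseteq\mathcal B(H)\setminus\Phi_+\), then showing that points of \(\Sigma\) are not interior, then subtracting. The difference is that you cite both approximation facts, while the paper proves them with one explicit construction. The facts you cite are Niemiec's formula \(\overline{\mathcal C}=\mathcal B(H)\setminus\Phi_+\) and the common-boundary theorem giving \(\Sigma\subseteq\overline{\Phi_+}\).

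The paper's construction, for \(A\notin\Phi_+\), goes as follows.
\begin{itemize}
\item It takes the spectral projections \(P=\mathbf{1}_{[0,\delta)}(|A|)\) and \(Q=\mathbf{1}_{[0,\delta)}(|A^*|)\).
\item It uses that \(PH\) is infinite-dimensional and that \(A\) maps \((I-P)H\) isomorphically onto \((I-Q)H\).
\item It sets \(A_\delta=A(I-P)+\delta CP\), where \(C:PH\to QH\) is a contractive surjection with infinite-dimensional kernel. Then \(A_\delta\in\mathcal C\) and \(\norm{A_\delta-A}\le2\delta\).
\end{itemize}
For \(U\in\Sigma\), both \(PH\) and \(QH\) are infinite-dimensional. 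Replacing \(C\) by a unitary \(W\) gives an invertible operator within \(2\delta\) of \(U\). This is the case of the common-boundary theorem you cite, in the stronger form that \(\Sigma\) lies in the closure of the invertible group. It also gives nowhere density of \(\partial\mathcal U\) at once.

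Your route is shorter and isolates exactly which classical facts the theorem rests on. Its correctness depends on those citations holding in precisely the form you use. The paper's route is self-contained and quantitative, with a single spectral decomposition serving both the closure and the interior.

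Your other choices are sound alternatives to the paper's:
\begin{itemize}
\item Applying universality to \(A=0\) justifies \(\dim\ker U=\infty\), which the paper only asserts.
\item Deriving openness of \(\mathcal C_+\) from local constancy of the index on the open set \(\Phi_-\) is standard.
\item An injective compact diagonal \(K\), which lies in \(\overline{\mathcal U}\setminus\mathcal U\), works as well as the paper's \(U/n\to0\).
\item The operator \(B_\infty\oplus K\), which lies in \(\mathcal U\cap\Sigma\), works as well as the paper's \(B_\infty\oplus0\).
\end{itemize}
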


\begin{proof}
Every universal operator necessarily has infinite-dimensional kernel. Since \(\Phi_+\) is norm open and disjoint from \(\mathcal U\),
\[
 \overline{\mathcal U}\subseteq\mathcal B(H)\setminus\Phi_+.
\]
For the reverse inclusion, let \(A\notin\Phi_+\) and fix \(\delta>0\). Write \(|A|=(A^*A)^{1/2}\), and let \(\mathbf1_E(X)\)
denote the spectral projection of a positive operator \(X\)
associated with a Borel set \(E\). Take the orthogonal
spectral projections
\[
 P=\mathbf1_{[0,\delta)}(|A|),\qquad
 Q=\mathbf1_{[0,\delta)}(|A^*|).
\]
The space \(PH\) is infinite-dimensional. Indeed, if it were finite-dimensional, then \(\ker A\subseteq PH\) would be finite-dimensional, and \(|A|\) would be bounded below on \((I-P)H\). The finite-dimensional part also has a positive lower bound after its kernel is removed, so \(A\) would have closed range and belong to \(\Phi_+\). Write \(A=V|A|\) for its polar decomposition. Spectral calculus and \(V\) show that
\[
 A(PH)\subseteq QH,\qquad \norm{AP}\le\delta,
 \qquad
 A:(I-P)H\longrightarrow(I-Q)H
 \text{ is an isomorphism}.
\]
Indeed, \(AP=QA\), and \(V\) maps \((I-P)H\) unitarily
onto \((I-Q)H\). On \((I-P)H\), the positive operator
\(|A|\) is bounded below by \(\delta\), so the inverse of
the displayed restriction has norm at most \(1/\delta\). Since \(PH\) is infinite-dimensional and separable, there is a surjection
\[
 C:PH\longrightarrow QH,\qquad
 \dim\ker C=\infty,\qquad \norm{C}\le1.
\]
If \(QH\) is infinite-dimensional, choose a coisometry with infinite-dimensional kernel; if it is finite-dimensional, choose a projection onto finitely many coordinates followed by an isometry. When \(QH=\{0\}\), take \(C=0\). Set
\[
 A_\delta=A(I-P)+\delta CP.
\]
Relative to the domain decomposition \(PH\oplus(I-P)H\) and the range decomposition \(QH\oplus(I-Q)H\), this is the direct sum of \(\delta C\) and an isomorphism. It is therefore surjective with infinite-dimensional kernel, so \(A_\delta\in\mathcal C\) by Caradus's criterion. Moreover,
\[
 \norm{A_\delta-A}=\norm{\delta CP-AP}\le2\delta.
\]
Letting \(\delta\downarrow0\) and using \(\mathcal C\subseteq\mathcal C_+\subseteq\mathcal U\) proves \eqref{eq:universalclosure}. For the interior, Pozzi's criterion and the norm openness
of \(\mathcal C_+\) give
\(\mathcal C_+\subseteq\operatorname{int}\mathcal U\). Conversely, suppose \(U\in\mathcal U\setminus\mathcal C_+\). It is neither upper nor lower semi-Fredholm. Hence both \(PH\) and \(QH\) above are infinite-dimensional for every \(\delta>0\); for \(QH\), apply the preceding argument to \(U^*\), since \(U\notin\Phi_-\) if and only if \(U^*\notin\Phi_+\). Choose a unitary \(W:PH\to QH\) and replace \(C\) by \(W\). Then
\[
 V_\delta=U(I-P)+\delta WP
\]
is invertible and satisfies \(\norm{V_\delta-U}\le2\delta\). An invertible operator cannot be universal, because its restrictions have trivial kernel. Thus \(U\) is not an interior point, proving \eqref{eq:universalinterior}. Subtracting the interior from the closure gives \eqref{eq:universalboundary}. This boundary is closed. Every operator on it has the unitary approximation just constructed, so no open ball can lie in the boundary; hence it is nowhere dense. Finally, if \(U\) is universal, then \(U/n\) is universal and tends in norm to \(0\), which is not universal. Thus \(\mathcal U\) is not closed. It is not open because \(B_\infty\oplus0\) is universal by the inheritance principle, but has infinite-codimensional range and therefore lies outside \(\mathcal C_+\). Nonempty interior follows from \(B_\infty\in\mathcal C\); failure of density follows from the open ball \(\norm{A-I}<1\), which consists of invertible operators and lies in \(\Phi_+\).
\end{proof}

Neither \(\mathcal C\) nor \(\mathcal C_+\) is norm closed:
the operators \(B_\infty/n\) belong to both classes and
converge to \(0\), which belongs to neither. Surjectivity is norm open, so \(\mathcal C=\mathcal C_+\cap\{A:\operatorname{Ran}A=H\}\) is open as well. Thus its norm interior is \(\mathcal C\), and its boundary is \(\mathcal D\setminus\mathcal C\). Together with Theorem \ref{thm:topology}, this proves the norm entries of Table \ref{tab:topology}. Formula \eqref{eq:universalinterior} gives a precise interpretation of Pozzi's criterion: the universal operators whose universality survives every sufficiently small norm perturbation are exactly the Caradus--Pozzi operators. The full universal class differs from this open class only by a subset of the closed nowhere dense set of non-semi-Fredholm operators. Rota's backward shift model introduced in Section \ref{sec:introduction} makes the nonempty interior quantitative.

\begin{corollary}
\label{cor:universalball}
If \(\norm{V-B_\infty}<1\), then \(V\in\mathcal C\). Consequently the distance from \(B_\infty\) to the complement of \(\mathcal U\) is exactly \(1\).
\end{corollary}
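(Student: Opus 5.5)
The plan is to show that \(V\) is surjective and has infinite-dimensional kernel, using the right inverse \(S_\infty\) of \(B_\infty\). Write \(V=B_\infty+E\) with \(\norm{E}<1\). Since \(B_\infty S_\infty=I\) and \(\norm{S_\infty}=1\), we get
\[
 VS_\infty=I+ES_\infty,\qquad \norm{ES_\infty}\le\norm{E}<1 .
\]
By the Neumann series \(I+ES_\infty\) is invertible, so \(R=S_\infty(I+ES_\infty)^{-1}\) satisfies \(VR=I\). Hence \(\operatorname{Ran}V=H\).

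For the kernel, note that \(RV\) is an idempotent with \(\ker(RV)=\ker V\), because \(R\) is injective. The complementary projection is \(P=I-RV\), and its range is exactly \(\ker V\). I will compare \(P\) with \(P_0=I-S_\infty B_\infty\). This is the orthogonal projection onto \(\ker B_\infty\), the copy of \(H\) in the zeroth coordinate, which is infinite-dimensional. A cleaner route avoids any estimate of \(\norm{P-P_0}\) altogether.

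\(B_\infty\) is surjective with infinite-dimensional kernel, so it is lower semi-Fredholm with index \(+\infty\). The class \(\Phi_-\) is norm open, and the index is locally constant on it. So one wants the whole segment \(B_\infty+sE\), \(s\in[0,1]\), to lie in \(\Phi_-\). This holds by the first step applied to each \(sE\), since \(\norm{sE}<1\): every operator on the segment is surjective, hence in \(\Phi_-\). The index is continuous on \(\Phi_-\) with values in \(\mathbb Z\cup\{+\infty\}\), and the segment is connected. Therefore \(\dim\ker V=\dim\ker B_\infty=\infty\).

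The main obstacle is justifying this index constancy in the form needed. If one prefers a self-contained argument, use the explicit bounded surjection \(V_s=B_\infty+sE\) with right inverse \(R_s\), which depends continuously on \(s\). The kernel projections \(P_s=I-R_sV_s\) then vary norm-continuously. Idempotents at distance less than \(1/\max(\norm{P_s},\norm{P_{s'}})\) have isomorphic ranges, by the standard map \(P_{s'}|_{\operatorname{Ran}P_s}\) composed with \(P_s\). By compactness of \([0,1]\), \(\operatorname{Ran}P_1\cong\operatorname{Ran}P_0=\ker B_\infty\), which is infinite-dimensional. This gives \(V\in\mathcal C\).

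For the distance statement, the first part shows that the open unit ball about \(B_\infty\) lies in \(\mathcal C\subseteq\mathcal U\), so the distance to the complement is at least \(1\). For the reverse bound, take \(V=B_\infty+S_\infty\), the perturbation \(S_\infty\) having norm \(1\). I claim \(V\) is injective. If \(Vx=0\), then \(x_{k+1}=-x_{k-1}\) for \(k\ge1\) and \(x_1=0\). Hence \(x_{2j}=(-1)^jx_0\), and square-summability forces \(x=0\). Since an injective operator is not universal, the distance is exactly \(1\).
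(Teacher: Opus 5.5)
Your proof is correct. The surjectivity step is the paper's, but you handle the kernel and the upper distance bound by different routes.

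\textbf{Kernel.} The paper uses exactly your idempotent \(P=I-RV\), with \(R=S_\infty(VS_\infty)^{-1}\), but only restricted to \(\ker B_\infty\). For \(x\in\ker B_\infty\) one has \(Px=x-RVx\) with \(RVx\in\operatorname{Ran}S_\infty=(\ker B_\infty)^\perp\). Hence \(\norm{Px}^2=\norm{x}^2+\norm{RVx}^2\), so \(P\) maps \(\ker B_\infty\) injectively into \(\operatorname{Ran}P=\ker V\).

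This one orthogonality observation makes any comparison of \(P\) with \(P_0\) unnecessary. With it go both your appeal to local constancy of the semi-Fredholm index along the segment \(B_\infty+sE\) and your chain of nearby idempotents over \([0,1]\). Both of your routes are valid. The index argument is conceptual and yields \(\dim\ker V=\dim\ker B_\infty\) at once. The idempotent chain is a self-contained proof of the same stability. However, they import Kato's stability theorem, or a continuity-and-compactness argument, where a two-line computation suffices.

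\textbf{Distance.} The paper uses the witness \(0\), which is at distance \(\norm{B_\infty}=1\) and is not universal. Your injective operator \(B_\infty+S_\infty\) also works, and your recurrence computation \(x_1=0\), \(x_{2j}=(-1)^jx_0\) is right. It just costs an extra calculation where the zero operator is immediate.
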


\begin{proof}
The operator \(VS_\infty=I+(V-B_\infty)S_\infty\) is invertible by the Neumann series, so \(V\) is surjective. The map
\[
 x\longmapsto x-S_\infty(VS_\infty)^{-1}Vx
 \qquad(x\in\ker B_\infty)
\]
takes values in \(\ker V\) and is injective: its second term belongs to \(\operatorname{Ran}S_\infty\), which is orthogonal to \(\ker B_\infty\). Thus \(\ker V\) is infinite-dimensional. The distance is at least \(1\) by this ball inclusion and at most \(1\) because \(0\notin\mathcal U\).
\end{proof}

Curiously, small scalar translations preserve universality, even at the boundary points of \(\mathcal U\).

\begin{corollary}
\label{cor:universaltranslations}
For every \(U\in\mathcal U\) there exists an \(r>0\) such that \(U-\lambda I\) is universal whenever \(|\lambda|<r\).
\end{corollary}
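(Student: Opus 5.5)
The plan is to reduce to Rota's model $B_\infty$ sitting inside $U$, and then use Corollary~\ref{cor:universalball} together with the inheritance property recalled in Section~\ref{sec:introduction}.

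\textbf{Locating $B_\infty$ inside $U$.} Apply the definition of universality of $U$ to the particular operator $A=B_\infty$. This yields a scalar $c\ne0$, a closed $U$-invariant subspace $F$, and a bounded isomorphism $J:H\to F$ with $UJ=cJB_\infty$. In other words, $U|_F=J(cB_\infty)J^{-1}$. I will take $r=|c|$.

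\textbf{Translating the restriction.} Fix $|\lambda|<|c|$ and put $\mu=\lambda/c$, so $|\mu|<1$. The subspace $F$ is still invariant for $U-\lambda I$, and
\[
 (U-\lambda I)|_F=J\bigl(c(B_\infty-\mu I)\bigr)J^{-1}.
\]
Since $\norm{(B_\infty-\mu I)-B_\infty}=|\mu|<1$, Corollary~\ref{cor:universalball} gives $B_\infty-\mu I\in\mathcal C\subseteq\mathcal U$.

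\textbf{Transferring universality.} Next I check two elementary closure properties of $\mathcal U$.
\begin{itemize}
\item \emph{Nonzero scalar multiples.} Suppose $WJ_0=c_0J_0A$ realises universality of $W$ for a given $A$. Then $(cW)J_0=(cc_0)J_0A$, so $cW$ is universal.
\item \emph{Similarity onto a space isomorphic to $H$.} Suppose $X=J Y J^{-1}$ with $J:H\to F$ a bounded isomorphism and $Y$ universal on $H$. If $YJ_0=c_0J_0A$ with $J_0:H\to M$, then $JM\subseteq F$ is closed and $X$-invariant, and $X(JJ_0)=c_0(JJ_0)A$.
\end{itemize}
Applying both to $Y=B_\infty-\mu I$, the restriction $(U-\lambda I)|_F$ is universal, after identifying $F$ with $H$ via $J$. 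The inheritance property \cite[Remark~1.4]{Pozzi} then shows that $U-\lambda I$ is universal for every $|\lambda|<r$.

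\textbf{Main obstacle.} The only delicate point is the bookkeeping in the similarity step. The restriction acts on $F$ rather than on $H$, so universality must be interpreted after identifying $F$ with $H$ through $J$, which is what the second closure property above handles. The quantitative input is entirely supplied by Corollary~\ref{cor:universalball}. In particular, no semi-Fredholm hypothesis on $U$ is needed, which is why the statement also covers boundary points of $\mathcal U$.
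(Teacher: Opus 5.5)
Your proposal is correct and is essentially the paper's proof: you make the same choice $r=|c|$ from $U|_F\sim cB_\infty$ and the same substitution $\mu=\lambda/c$, and you conclude the same way, via Caradus's criterion and the inheritance property. The only difference is that you obtain $B_\infty-\mu I\in\mathcal C$ by citing Corollary~\ref{cor:universalball} (since $\norm{\mu I}<1$), whereas the paper verifies surjectivity through $(B_\infty-\mu I)S_\infty=I-\mu S_\infty$ and exhibits the kernel vectors $(v,\mu v,\mu^2v,\ldots)$ directly; this is the same Neumann-series argument in packaged form.
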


\begin{proof}
Choose \(M\) with \(U|_M\sim cB_\infty\), \(c\ne0\), and put \(r=|c|\). For \(|\lambda|<r\), write \(\mu=\lambda/c\). Since
\[
 (B_\infty-\mu I)S_\infty=I-\mu S_\infty,
\]
the Neumann series makes \(B_\infty-\mu I\) surjective. Its kernel contains the vectors
\[
 (v,\mu v,\mu^2v,\ldots),\qquad v\in H,
\]
so the kernel is infinite-dimensional. Therefore
\((U-\lambda I)|_M\sim c(B_\infty-\mu I)\) is universal by Caradus, and the inheritance principle applies proving the universality of \(U-\lambda I\).
\end{proof}

 There are two further consequences of Theorem \ref{thm:topology}. First, every compact operator belongs to \(\overline{\mathcal U}\), since a compact operator on infinite-dimensional \(H\) cannot be upper semi-Fredholm. Also, compact operators are not universal, so they must belong to $\partial\mathcal U$.  Second, \(\mathcal U\) has the Baire property as it is the union of an open set and a meagre set.

\section*{Declaration of AI use}

The authors used GPT-6.1 Sol throughout the preparation of this
manuscript. They had been investigating related questions for
several years before using this tool. Nevertheless, AI played
a crucial role in the development of the present paper,
including the formulation of results, the development and
revision of proofs, the identification and checking of
references, and the organisation and editing of the exposition.
The collaboration involved more than fifty author prompts in an
extended, iterative exchange. The authors take full
responsibility for the mathematical claims, citations, and
final text.

\section{Acknowledgments} Ben-Hur Eidt is supported by the São Paulo Research Foundation (FAPESP), Brasil. Process Number 2024/14727-4.

\affiliationone{Jo\~ao Carmo\\
  Instituto Federal da Bahia (IFBA)\\
  Campus Seabra\\
  Seabra, Bahia, Brasil
  \email{joaoribeiro@ifba.edu.br}}%
\affiliationtwo{Ben Hur Eidt\\
  ICMC, Universidade de S\~ao Paulo\\
  S\~ao Carlos, SP, Brazil
  \email{benhur@icmc.usp.br}}%
\hspace*{-10pt}% Remove unused trailing padding.
\affiliationthree{Waleed Noor\\
  IMECC, Universidade Estadual\\
  de Campinas\\
  Campinas, SP, Brazil
  \email{waleed@unicamp.br}}
\end{document}